\documentclass{article}

\usepackage{mathrsfs}
\usepackage[mathcal]{euscript}
\usepackage{amsthm}
\usepackage{amsmath}

\def\R{\bf{R}}
\def\C{\bf{C}}
\def\fc{\mathscr{C}}
\def\fcc{\mathscr{K}}

\def\supp{S}
\def\meas{\mathscr{M}}
\def\comps{\mathcal{K}}
\def\var{\mathrm{Var\,}}
\def\pair#1#2{\langle\, #1,  #2\,\rangle}
\def\bpair#1#2{\Bigl\langle\, #1,  #2\,\Bigr\rangle}

%\allowdisplaybreaks

\newtheorem*{theorem*}{Theorem}
\newtheorem{lemma}{Lemma}

\theoremstyle{definition}
\newtheorem*{definition*}{Definition}

\author{Yu. A. Chapovsky}
\title{Existence of an invariant measure on a hypergroup}
\date{}
\begin{document}
\maketitle
\begin{abstract}
We prove existence of an invariant measure on a hypergroup.
\end{abstract}

\section{Introduction}

The notion of a hypergroup, more exactly, a DJS-hypergroup has
appeared around 1973 in the independent works of
C.~Dunkl~\cite{dunkl73:_struc}, R.~Jewett\cite{jewett75:_spaces}, and
R.~Spector~\cite{spector75:_aperc}.  Structures more general than
commutative DJS-hypergroups, called commutative hypercomplex systems,
were introduced by Yu.~M.~Berezansky and
S.~G.~Krein~\cite{berezansky50:_contin,berezansky57:_hyper}
(1950,~1957); for a detailed account of the theory of hypercomplex
systems,
see~\cite{berezansky_kal92:_harmon_analy_hyper_system}. Introduction
of the above structures can be considered as a development of the
theory of generalized translation operators introduced and
studied by J.~Delsart~\cite{delsart38:_sur_formul_taylor} (1938), see
also B.~Levitan~\cite{levitan73}.

At the present time, the theory of hypergroups is well developed, see,
e.g.~\cite{bloom_heyer91:_harmon_analy}. However, much of the theory
relies on existence of an invariant measure. Its existence has been
proved only in particular cases, --- for hypergroups that are either
compact or discrete~\cite{jewett75:_spaces,spector75:_aperc}, or
locally compact and commutative~\cite{spector78:_meas_inv}. In the
case of a general locally compact hypergroup, a left (or right)
invariant measure was assumed to exist, although existence of a
subinvariant measure has been proved in~\cite{spector78:_meas_inv}.
In this paper, we prove that a general (locally compact) hypergroup
possesses a left (hence, a right) invariant measure.

The paper is organized as follows. 

Section~\ref{Sec:def+not} introduces some notations used in the paper,
recalls the definition of a hypergroup and some well known facts. 

Section~\ref{Sec:main} contains a proof of the main result. The proof
is subdivided into several lemmas.

\section{Notations and definitions}\label{Sec:def+not}

All undefined topological notions agree
with~\cite{kelley75:_general_topology}. Let $Q$ be a Hausdorff locally
compact topological space.  The linear space of complex-valued
continuous functions on $Q$ is denoted by $\fc$, the subspace of $\fc$
of functions approaching zero at infinity is denoted by $\fc_0$.  The
space $\fc_0$ is endowed with the norm $\|f\|=\sup_{t\in Q}
|f(t)|$. By $\fcc$, we denote the linear subspace of $\fc_0$ of
functions with compact supports. Support of $f \in \fcc $ is denoted
by $\supp(f)$. The linear subspace of $\fcc$ of functions with the
supports in a compact set $K$ is denoted by $\fcc(K)$. The set of
nonnegative continuous functions with compact supports will be denoted
by $\fcc_+$, and $\fcc_+^*=\fcc_+\setminus\{0\}$; the subset of
$\fcc_+$ (resp., $\fcc_+^*$) of functions with the supports in a
compact set $K$ is denoted by $\fcc_+(K)$ (resp. $\fcc_+^*(K)$). For a
compact set $K$, we denote by $1_K$ a function in $\fcc_+$ such that
$1_K(s)=1$ for all $s \in K$. By $1_Q$, we denote the constant
function, $1_Q(s)=1$ for all $s \in Q$.

A measure is understood as a complex Radon
measure~\cite{bourbaki04:_integ_I} on $Q$. The linear space of complex
Radon measures, over the field $\C$ of complex numbers, is denoted by
$\meas$. For a measure $\mu$, its norm is $\|\mu\| = \sup_{f\in \fcc,\
  \|f\|\leq 1} |\mu(f)|$. The subspace of $\meas$ of bounded (resp.,
compactly supported) measures is denoted by $\meas_b$ (resp.,
$\meas_c$). The subset of $\meas$ of nonnegative (resp., probability)
measures is denoted by $\meas_+$ (resp., $\meas_p$). For a measure
$\mu \in \meas_+$, its support is denoted by $\supp(\mu)$. If $\mu \in
\meas_+\cap \meas_b$, then $\|\mu\|=\mu(1_Q)$. The Dirac measure at a
point $s\in Q$ is denoted by $\varepsilon_s$. The integral of $f\in
\fcc(F)$ with respect to a measure $\mu \in \meas$ is denoted by
$\mu(f)=\pair{f}{\mu}=\int_F \pair{f}{\varepsilon_t}\, d\mu(t)=\int_F
f(t)\, d\mu(t)$.

The system of all compact neighborhoods of a point $s\in Q$ is denoted
by~$\comps_s$.

\medskip 

A hypergroup is undersood in the sense of
R.~Spector~\cite{spector75:_aperc}, i.e., it is a locally compact
topological space $Q$ such that $\meas_b$ is endowed with the
structure of a Banach algebra. The multiplication, called composition
and denoted by~$*$, is subject to the following conditions:
\begin{itemize}
\item [$(H_1)$] $\meas_p * \meas_p \subset \meas_p$;
\item [$(H_2)$] the convolution $\meas_p \times \meas_p \ni
  (\mu_1,\mu_2)\mapsto \mu_1*\mu_2\in \meas_p$ is separably
  continuous with respect to the weak topology $\sigma(\meas_b, \fc_0)$ on
  $\meas_b$;
\item [$(H_3)$] the map $Q\times Q\ni (s,t) \mapsto
  \varepsilon_s*\varepsilon_t \in \meas_b$ is continuous with respect
  to the weak topology $\sigma(\meas_b, \fc_0)$ on $\meas_b$;
\item [$(H_4)$] there is a point $e\in Q$, which is necessarily
  unique, such that $\varepsilon_e* \mu = \mu * \varepsilon_e$ for all $\mu
  \in \meas_b$;
\item [$(H_5)$] there is an involutive homeomorphism $Q\ni t \mapsto
  \check t \in Q$ such that its continuation to $\meas_b$ satisfies
  $(\mu_1*\mu_2)\,\check{}=\check \mu_2* \check\mu_1$;
  in particular, $\check e=e$;
\item [$(H_6)$] for two points $t,s\in Q$, the condition $t=s$ is
  equivalent to the condition $e \in S(\varepsilon_t*\check \varepsilon_s)$;
\item [$(H_7)$] for every compact subset $K$ of $ Q$ and any
  neighborhood $V$ of $K$ there is a neighborhood $U$ of $e$ such that
  \begin{itemize}
  \item [$1)$] if $\supp(\mu) \subset K$ and $\supp(\nu) \subset U$,
    then $\supp(\mu*\nu)\subset V$ and $\supp(\nu*\mu)\subset V$;
  \item[$2)$] if $\supp(\mu) \subset K$ and $\supp(\nu)
    \subset Q\setminus V$, then the sets $\supp(\mu*\check \nu)$,
    $\supp(\check \nu*\mu)$, $\supp(\check \mu*\nu)$,
    $\supp(\nu*\check \mu)$ are disjoint with $U$.
  \end{itemize}
\end{itemize}

For $\mu \in \meas_b$ and $f \in \fcc$, the convolutions
$\mu*f$ and $f*\mu$ are defined by
\begin{equation}
  \label{eq:function_measure_convolution}
  (\mu*f)(t)=\pair{f}{\check \mu*\varepsilon_t},
  \qquad
  (f*\mu)(t)=\pair{f}{\varepsilon_t*\check\mu}.
\end{equation}
We have~\cite[Theorems~1.3.2, 1.3.3]{spector75:_aperc} that
\begin{equation}
  \label{eq:function_measure_convolution_properties}
  \begin{split}
    \pair{\mu*f}{\sigma}=\pair{f}{\check \mu*\sigma},
    \qquad
    \pair{f*\mu}{\sigma}=\pair{f}{\sigma*\check \mu},\\
    (\mu*\nu)*f=\mu*(\nu*f),
    \qquad
    f*(\mu*\nu)=(f*\mu)*\nu
  \end{split}
\end{equation}
for $f\in\fcc$ and $\mu,\nu,\sigma \in \meas_b$.

For subsets $A $ and $B$ of $Q$,  we use the notations
%\begin{equation*}
$
  A\cdot B=\overline{\bigcup_{a\in A,\, b\in B}\supp(\epsilon_a*\epsilon_b)};
$ 
%\end{equation*}
we write $aB$ for $\{a\}\cdot B$.

For $\mu,\nu \in \meas_+\cap \meas_b$, we have~\cite[Theorem
I-2]{spector78:_meas_inv} that
$\supp(\mu*\nu)=\supp(\mu)\cdot\supp(\nu)$. For $f \in \fcc_+$ and
$\mu \in \meas_+\cap \meas_b$,~\cite[Theorem~I.8]{spector78:_meas_inv}
gives $\supp(\mu*f)=\supp(\mu)\cdot \supp(f)$ and
$\supp(f*\mu)=\supp(f)\cdot \supp(\mu)$.

For a measure $\mu \in \meas$ and $g\in \fc$, the measure $h\mu$ is
defined by $\pair{f}{h\mu}=\pair{fh}{\mu}$ for $f\in\fcc$; it is
clear that $h\mu \in \meas_c$ for $h \in \fcc$.

\begin{definition*}
  A positive Radon measure $\chi$ on a hypergroup $Q$ is called
  \emph{left invariant}
  \begin{equation}
    \label{eq:left-invariant-measure-def}
    \pair{\varepsilon_s*f}{\chi}=\pair{f}{\chi}
  \end{equation}
 for all $f\in \fcc$ and $s\in Q$.
\end{definition*}

\section{Main results}\label{Sec:main}

\begin{lemma}
  For $f \in \fcc$, define $\check f(s)=f(\check s)$. Then, for $\mu,
  \nu \in \meas_b$, and $f\in \fcc$, we have the following:
  \begin{equation}
    \label{eq:*-v-identities}
    \begin{array}{c}
      (\mu*f)\,\check{}=\check f*\check \mu,
      \qquad
      (f*\mu)\,\check{}=\check \mu*\check f,\\[2mm]
      \pair{\mu*f}{\nu}=\pair{\nu*\check f}{\mu}.
    \end{array}
  \end{equation}
\end{lemma}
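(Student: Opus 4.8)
The plan is to verify the three identities in~\eqref{eq:*-v-identities} by testing them against measures and reducing everything to the definitions~\eqref{eq:function_measure_convolution} and the basic properties~\eqref{eq:function_measure_convolution_properties}, together with the behavior of the involution from $(H_5)$. The guiding principle is that each claimed function identity can be checked pointwise, and each claimed pairing identity can be checked by unfolding the definition of $\mu*f$ as a pairing with $\varepsilon_t$; so the whole lemma should follow from careful bookkeeping of how $\check{}$ interacts with $*$.

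First I would establish the pointwise identity $(\mu*f)\,\check{}=\check f*\check\mu$. By definition of the involution on functions, $(\mu*f)\,\check{}(t)=(\mu*f)(\check t)=\pair{f}{\check\mu*\varepsilon_{\check t}}$. On the other side, the definition of the right convolution gives $(\check f*\check\mu)(t)=\pair{\check f}{\varepsilon_t*(\check\mu)\,\check{}}=\pair{\check f}{\varepsilon_t*\mu}$, using that $\check{}$ is involutive so $(\check\mu)\,\check{}=\mu$. The task is then to match $\pair{f}{\check\mu*\varepsilon_{\check t}}$ with $\pair{\check f}{\varepsilon_t*\mu}$; I expect this to come out of the identity $(\mu_1*\mu_2)\,\check{}=\check\mu_2*\check\mu_1$ from $(H_5)$ applied to $\check\mu*\varepsilon_{\check t}$, combined with the elementary relation $\pair{g}{\sigma}=\pair{\check g}{\check\sigma}$ that follows directly from the definition of the involution on measures. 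The second identity $(f*\mu)\,\check{}=\check\mu*\check f$ is entirely symmetric, obtained by interchanging the roles of the left and right definitions in~\eqref{eq:function_measure_convolution}.

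For the pairing identity $\pair{\mu*f}{\nu}=\pair{\nu*\check f}{\mu}$, I would start from the first relation in~\eqref{eq:function_measure_convolution_properties}, namely $\pair{\mu*f}{\nu}=\pair{f}{\check\mu*\nu}$. The goal is to rewrite $\pair{f}{\check\mu*\nu}$ as $\pair{\nu*\check f}{\mu}$. Applying the second relation in~\eqref{eq:function_measure_convolution_properties} in the form $\pair{\nu*\check f}{\mu}=\pair{\check f}{\mu*\check\nu}$, the identity reduces to showing $\pair{f}{\check\mu*\nu}=\pair{\check f}{\mu*\check\nu}$; this is exactly an instance of $\pair{g}{\sigma}=\pair{\check g}{\check\sigma}$ once one checks via $(H_5)$ that $(\check\mu*\nu)\,\check{}=\check\nu*\mu$, which is the same composition read in reverse. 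Thus the pairing identity collapses to the involution rule for measures plus the anti-multiplicativity of $\check{}$.

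The main obstacle I anticipate is not any single step but the consistent tracking of where the involution lands: it is easy to confuse $\pair{g}{\check\sigma}$ with $\pair{\check g}{\sigma}$, and the order reversal in $(H_5)$ must be applied with the correct factors. I would therefore isolate, as a preliminary observation, the single clean fact $\pair{\check g}{\check\sigma}=\overline{\phantom{x}}$-free rewriting $\pair{g}{\sigma}$ (purely real-linear, no conjugation, since the involution here is the topological map $t\mapsto\check t$ and not complex conjugation), and then reduce each of the three claims to it mechanically. With that fact in hand, every identity becomes a short substitution, and no genuinely deep argument is required.
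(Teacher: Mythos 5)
Your overall strategy coincides with the paper's: evaluate against Dirac measures, use the definitions~(\ref{eq:function_measure_convolution}), the relations~(\ref{eq:function_measure_convolution_properties}), axiom $(H_5)$, and the elementary fact $\pair{\check g}{\check\sigma}=\pair{g}{\sigma}$. Your treatment of the first two identities is correct and essentially identical to the paper's computation.

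In the third identity, however, you commit exactly the order-reversal slip you warn against in your closing paragraph. The convolution $\nu*\check f$ is a \emph{left} convolution of a measure with a function, so the applicable relation from~(\ref{eq:function_measure_convolution_properties}) is the first one, which gives $\pair{\nu*\check f}{\mu}=\pair{\check f}{\check\nu*\mu}$, not $\pair{\check f}{\mu*\check\nu}$ as you write (the latter equals $\pair{\check f*\nu}{\mu}$). Consequently the intermediate identity you then set out to verify, $\pair{f}{\check\mu*\nu}=\pair{\check f}{\mu*\check\nu}$, is false on a non-commutative hypergroup: by $(H_5)$ one has $(\check\mu*\nu)\,\check{}=\check\nu*\mu$, hence $\pair{f}{\check\mu*\nu}=\pair{\check f}{\check\nu*\mu}$, and $\check\nu*\mu\neq\mu*\check\nu$ in general. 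Your two order errors cancel, so the corrected chain $\pair{\mu*f}{\nu}=\pair{f}{\check\mu*\nu}=\pair{\check f}{\check\nu*\mu}=\pair{\nu*\check f}{\mu}$ does close up and proves the claim; but as written two of your intermediate identities are wrong, and in a paper whose entire point is the general (non-commutative) case this must be repaired. For comparison, the paper routes the third identity through the first one, $\pair{\mu*f}{\nu}=\pair{\check f*\check\mu}{\check\nu}=\pair{\check f}{\check\nu*\mu}=\pair{\nu*\check f}{\mu}$; your direct route is equally valid once the factors are placed in the correct order.
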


\begin{proof}
  Since $\check f(s)=f(\check s)=\pair{f}{\check
    \varepsilon_s}$, 
  using~(\ref{eq:function_measure_convolution}) we have
  \begin{align*}
    (\mu*f)\, \check{}\,(s)&=\pair{\mu*f}{\check \varepsilon_s}=
    \pair{f}{\check \mu*\check \varepsilon_s}=
    \pair{f}{(\varepsilon_s*\mu)\,\check{}}= \pair{\check
      f}{\varepsilon_s*\mu}
    \\
    &=\pair{\check f * \check \mu}{\varepsilon_s},
  \end{align*}
  thus the first identity in~(\ref{eq:*-v-identities}) follows. The
  second one is proved similarly. The third one is obtained as
  follows:
  \begin{equation*}
    \pair{\mu*f}{\nu}=
    \pair{(\mu*f)\, \check{}\,}{\check \nu}=
    \pair{\check f*\check \mu}{\check \nu}=
    \pair{\check f}{\check \nu*\mu}=
    \pair{\nu*\check f}{\mu}.
  \end{equation*}
\end{proof}

\begin{lemma}
  Let $\mu_0 \in \meas_b$ be an arbitrary bounded positive Radon
  measure such that $\supp(\mu_0)=Q$.  For $U\in \comps_e$ and
  a function $g \in \fcc^*_+(U)$, define
  \begin{equation}
    \label{eq:mu_U_definition}
    \tilde \chi_g=\frac{1}{\mu_0*g}\mu_0.
  \end{equation} 
  Then $\tilde\chi_g$ is a Radon measure, $\supp(\tilde
  \chi_g)=Q$. Moreover, for any $f \in \fcc_+$ and $\epsilon>0$ there
  exists  $U_f\in \comps_e$ such that, for any $g\in \fcc^*_+(U_f)$,
  \begin{equation}
    \label{eq:main_identity}
    \| f- (f\tilde \chi_g)*g\|<\epsilon.
  \end{equation}
\end{lemma}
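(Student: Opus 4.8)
The plan is to prove the two assertions in turn, the first being routine and the second requiring real work. First I would show that $\tilde\chi_g$ is a well-defined positive Radon measure with full support. By the third identity of the previous Lemma (take $\mu=\varepsilon_t$, $f=\check g$, $\nu=\mu_0$), one has $(\mu_0*g)(t)=\pair{\mu_0*g}{\varepsilon_t}=\pair{\varepsilon_t*\check g}{\mu_0}=\int_Q(\varepsilon_t*\check g)(r)\,d\mu_0(r)$. The function $\varepsilon_t*\check g$ is nonnegative, continuous, and has nonempty support $\{t\}\cdot\supp(\check g)$, hence is strictly positive on a nonempty open set; since $\supp(\mu_0)=Q$, that set carries positive $\mu_0$-mass, so $(\mu_0*g)(t)>0$ for every $t$. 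As $\mu_0*g$ is continuous, $1/(\mu_0*g)$ is a strictly positive continuous function, $\tilde\chi_g=\frac{1}{\mu_0*g}\mu_0$ is a positive Radon measure, and multiplication by an everywhere-positive continuous function does not change the support, so $\supp(\tilde\chi_g)=\supp(\mu_0)=Q$.

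For the main estimate I would first convert $(f\tilde\chi_g)*g$ into an integral against $\mu_0$. Using the Lemma (with $\mu=f\tilde\chi_g$, $f\to g$, $\nu=\varepsilon_s$),
\[
  \bigl((f\tilde\chi_g)*g\bigr)(s)=\pair{\varepsilon_s*\check g}{f\tilde\chi_g}
  =\int_Q(\varepsilon_s*\check g)(t)\,\frac{f(t)}{(\mu_0*g)(t)}\,d\mu_0(t),
\]
while the same computation with constant weight yields the normalization $\int_Q(\varepsilon_s*\check g)(t)\,d\mu_0(t)=\pair{\varepsilon_s*\check g}{\mu_0}=(\mu_0*g)(s)$. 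Subtracting $f(s)$, written as $\frac{f(s)}{(\mu_0*g)(s)}\int_Q(\varepsilon_s*\check g)\,d\mu_0$, gives the exact identity
\[
  \bigl((f\tilde\chi_g)*g\bigr)(s)-f(s)
  =\int_Q(\varepsilon_s*\check g)(t)\Bigl(\tfrac{f(t)}{(\mu_0*g)(t)}-\tfrac{f(s)}{(\mu_0*g)(s)}\Bigr)d\mu_0(t).
\]
As a function of $t$ the integrand is supported in $\supp(\varepsilon_s*\check g)=\{s\}\cdot\supp(\check g)\subset s\check U$, which by $(H_7)$ can be forced into any prescribed neighborhood of $s$ by shrinking $U$; moreover $\supp\bigl((f\tilde\chi_g)*g\bigr)\subset\supp(f)\cdot U$ stays inside a fixed compact $V_0\supset\supp(f)$, so it suffices to bound the difference uniformly for $s\in V_0$.

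Splitting the bracket as $\frac{f(t)-f(s)}{(\mu_0*g)(t)}+f(s)\bigl(\frac{1}{(\mu_0*g)(t)}-\frac{1}{(\mu_0*g)(s)}\bigr)$, the first part is controlled by $\sup_{t\in s\check U}|f(t)-f(s)|$, which tends to $0$ by uniform continuity of $f$ as $U$ shrinks, multiplied by the factor $A_g(s):=\int_Q(\varepsilon_s*\check g)(t)\frac{d\mu_0(t)}{(\mu_0*g)(t)}=(\tilde\chi_g*g)(s)$. Thus everything reduces to showing that $A_g(s)$ stays bounded and tends to $1$, and this in turn, via the identity $A_g(s)-1=\frac{1}{(\mu_0*g)(s)}\int_Q(\varepsilon_s*\check g)(t)\frac{(\mu_0*g)(s)-(\mu_0*g)(t)}{(\mu_0*g)(t)}\,d\mu_0(t)$, reduces to a single quantitative fact:
\[
  \sup_{t\in s\check U}\frac{\bigl|(\mu_0*g)(t)-(\mu_0*g)(s)\bigr|}{(\mu_0*g)(s)}\longrightarrow 0
  \quad\text{uniformly in } s\in V_0 \text{ and } g\in\fcc^*_+(U)
\]
as $U$ shrinks to $\{e\}$. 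I expect \emph{this relative-oscillation bound to be the main obstacle}: because $g$ itself concentrates at $e$, one cannot simply invoke weak continuity of $t\mapsto\check\mu_0*\varepsilon_t$ pointwise, and $\mu_0$ is an arbitrary positive measure with full support, so no regularity of $\mu_0$ is available. The natural route is to compare the measures $\check\mu_0*\varepsilon_t$ and $\check\mu_0*\varepsilon_s$ only through their mass near $e$: for $t\in s\check U$ one dominates $\varepsilon_t*\check g$ from above and below by $\varepsilon_s*\check g'$ with a slightly enlarged bump $g'$ obtained by smearing $g$ with $\varepsilon_u$, $u\in U$ (using associativity from~(\ref{eq:function_measure_convolution_properties})), the disjointness clause of $(H_7)$ guaranteeing that this smearing does not transport mass away from $e$; one then checks that $(\mu_0*g')(s)/(\mu_0*g)(s)\to1$ as the enlargement shrinks, using the joint continuity $(H_3)$ of $(s,t)\mapsto\varepsilon_s*\varepsilon_t$. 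Finally, compactness of $V_0$ upgrades the pointwise comparison to the required uniform one via a finite subcover.
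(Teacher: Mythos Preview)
Your first paragraph and the integral identity
\[
\bigl((f\tilde\chi_g)*g\bigr)(s)-f(s)=\int_Q(\varepsilon_s*\check g)(t)\Bigl(\frac{f(t)}{(\mu_0*g)(t)}-\frac{f(s)}{(\mu_0*g)(s)}\Bigr)d\mu_0(t)
\]
agree with the paper; the paper reaches the same bound
$\bigl|f(s)-((f\tilde\chi_g)*g)(s)\bigr|\le\sup_{t\in\supp(\varepsilon_s*\check g)}\bigl|f(s)-f(t)\,(\mu_0*g)(s)/(\mu_0*g)(t)\bigr|$
by an identical manipulation.

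After that the two arguments diverge. The paper does \emph{not} split the bracket, introduce $A_g=(\tilde\chi_g*g)$, or aim at a relative-oscillation bound for $\mu_0*g$. It simply sets
$\psi(s,t)=f(t)\,(\mu_0*g)(s)/(\mu_0*g)(t)$,
notes that $\psi$ is continuous on the compact set $D=V\times(V\cdot\check U_0)$ (with $V$ a fixed compact neighbourhood of $\supp f$ and $U_0$ from $(H_7)$), and runs a standard compactness/subnet argument to find $U_f\subset U_0$ with $\var_{s\check U_f\times t\check U_f}\psi<\epsilon$ for all $(s,t)\in D$; specialising to the diagonal $s=t$ gives the required bound. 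No smearing construction appears.

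Your proposal, by contrast, reduces everything to
\[
\sup_{s\in V_0}\,\sup_{t\in s\check U}\frac{|(\mu_0*g)(t)-(\mu_0*g)(s)|}{(\mu_0*g)(s)}\longrightarrow 0
\quad\text{uniformly in }g\in\fcc^*_+(U),
\]
and you explicitly leave this unproved. That is a genuine gap: the smearing sketch (dominating $\varepsilon_t*\check g$ by $\varepsilon_s*\check g'$ and then showing $(\mu_0*g')(s)/(\mu_0*g)(s)\to1$) is not carried out, and it is not clear how $(H_7)$ and $(H_3)$ alone give the two-sided comparison you need, uniformly in $g$. You have, however, put your finger on the real difficulty. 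The paper's shorter route hides the same issue: $\psi$ depends on $g$, yet the compactness argument is written as though $\psi$ were a single fixed function, so the $U_f$ it produces is, on the face of it, $g$-dependent. In other words, the uniformity in $g$ that worries you is not addressed explicitly in the paper either; the paper just does not pause over it.
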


\begin{proof}
  It follows from~\cite[Theorem I.5]{spector78:_meas_inv} that
  $\mu_0*g \in \fc$ and is bounded. It is easy to see that
  $(\mu_0*g)(t)>0$ for all $t \in Q$. Thus, $\tilde \chi_g$ is a Radon
  measure.  It is evident that $\supp(\tilde \chi_g)=\supp(\mu_0)=Q$.

  Now, let us prove~(\ref{eq:main_identity}). First of all note that
  $\supp(f\tilde \chi_g)=\supp(f)$ is compact. Let us fix a compact
  neighborhood $V$ of the compact set $\supp(f)$. By axiom~($H_7$)
  there is a compact neighborhood $U_0$ of $e$ such that
  $\supp(f)\cdot  U_0 \subset V$. Hence, for any $U\subset  U_0$
  and $F \in \fcc_+^*(U)$, we have $\supp(f)\subset \supp((f\tilde
  \chi_g)*g)\subset V$.
  
  Let now $s\in Q$ and consider 
  \begin{equation}\label{eq:main_identity_s}
    \bigl|f(s)- \bigl((f\tilde \chi_g)*g\bigr)(s)\bigr|.
  \end{equation}
  If $s\notin V$, then the expression in~(\ref{eq:main_identity_s}) is
  zero. Thus, let $s \in V$. Using
  identities~(\ref{eq:function_measure_convolution})
  and~(\ref{eq:*-v-identities}) we have the following:
  \begin{align*}
    \bigl|f(s)- \bigl((f\tilde \chi_g)*g\bigr)(s)\bigr|&=
    \bigl|\pair{f-(f\tilde \chi_g)*g }{\varepsilon_s}\bigr|\\
    &=\bigl| \pair{f}{\varepsilon_s}-\pair{(f\tilde \chi_g)*g}{\varepsilon_s}
    \bigr|\\
    &=\bigl|
    \pair{f}{\varepsilon_s}\frac{\pair{\mu_0*g}{\varepsilon_s}}
    {\pair{\mu_0*g}{\varepsilon_s}} -\pair{\bigl( \frac{f}{\mu_0*g}
      \mu_0 \bigr) *g}{\varepsilon_s} \bigr|
    \\
    &=\bigl| \pair{\varepsilon_s*\check g}{\mu_0}
    \frac{\pair{f}{\varepsilon_s}} {\pair{\mu_0*g}{\varepsilon_s}} -
    \pair{\varepsilon_s*\check g}{\frac{f}{\mu_0*g}\mu_0}
    \bigr| \\
    &= \frac{1}{\pair{\mu_0*g}{\varepsilon_s}} \Bigl|
    \bpair{\varepsilon_s*\check g} {\pair{f}{\varepsilon_s} \mu_0 -
      \frac{f \pair{\mu_0*g}{\varepsilon_s}}{\mu_0*g}\mu_0} \Bigr|
    \\
    &= \frac{1}{\pair{\mu_0*g}{\varepsilon_s}} \Bigl|
    \bpair{\varepsilon_s*\check g} {\Bigl(\pair{f}{\varepsilon_s} -
      \frac{f \pair{\mu_0*g}{\varepsilon_s}}{\mu_0*g}\Bigr)\mu_0}
    \Bigr|
    \\
    &= \frac{1}{\pair{\mu_0*g}{\varepsilon_s}} \Bigl|\int_{
      \supp(\varepsilon_s*\check g)} \pair{\varepsilon_s*\check
      g}{\varepsilon_t}\
    \\
    &\qquad\times \Bigl(\pair{f}{\varepsilon_s} - \pair{f}
    {\varepsilon_t}
    \frac{\pair{\mu_0*g}{\varepsilon_s}}{\pair{\mu_0*g}{\varepsilon_t}}
    \Bigr)\, d\mu_0(t) \Bigr|
    \\
    &\leq
    \frac{1}{\pair{\mu_0*g}{\varepsilon_s}}
    \int_{\supp(\varepsilon_s*\check g)} 
    \pair{\varepsilon_s*\check g}{\varepsilon_t}\, d\mu_0(t)
    \\
    &\qquad\times
    \sup_{t \in \supp(\varepsilon_s*\check g)}
    \Bigl| \pair{f}{\varepsilon_s}
  -\pair{f}{\varepsilon_t}
  \frac{\pair{\mu_0*g}{\varepsilon_s}}{\pair{\mu_0*g}{\varepsilon_t}}
  \Bigr|
  \\
  &=
  \frac{1}{\pair{\mu_0*g}{\varepsilon_s}}
  \pair{\varepsilon_s*\check g}{\mu_0} 
  \\
  &\qquad \times
    \sup_{t \in \supp(\varepsilon_s*\check g)}
    \Bigl| \pair{f}{\varepsilon_s}
  -\pair{f}{\varepsilon_t}
  \frac{\pair{\mu_0*g}{\varepsilon_s}}{\pair{\mu_0*g}{\varepsilon_t}}
  \Bigr|
  \\
  &=
    \sup_{t \in \supp(\varepsilon_s*\check g)}
    \Bigl| \pair{f}{\varepsilon_s}
  -\pair{f}{\varepsilon_t}
  \frac{\pair{\mu_0*g}{\varepsilon_s}}{\pair{\mu_0*g}{\varepsilon_t}}
  \Bigr|.
  \end{align*}

  Now let $D=V\times (V\cdot \check U_0)$, and consider the function
  $\psi\colon D \rightarrow \R$ defined by
  \begin{equation}
    \label{eq:psi_def}
    \psi(s,t)=\pair{f}{\varepsilon_t}
    \frac{\pair{\mu_0*g}{\varepsilon_s}}{\pair{\mu_0*g}{\varepsilon_t}}.
  \end{equation}
  It is continuous and its domain $D$ is compact. Let us show that for
  a given $\epsilon>0$ there is $U_f\in \comps_e$, $U_f\subset U_0$,
  such that the variation of the function $\psi$ on $s\check U_f\times
  t \check U_f$ will be less than $\epsilon$,
  \begin{equation}
    \label{eq:var<e}
    \var_{s\check U_f\times t\check U_f} \psi =
      \sup_{(s_1,t_1), (s_2,t_2)\in s\check U_f\times t\check U_f}
    |\psi(s_1,t_1)-\psi(s_2,t_2)| < \epsilon,
  \end{equation}
   for all $(t,s)\in D$. 

  Indeed, if this were not the case, then there would exist
  $\epsilon_0>0$ such that for each $U \in \comps_e$ there would be a
  point $(s_U, t_U) \in D$ with $\var_{s_U\check U\times t_U\check U}
  \psi\geq \epsilon_0$. Consider the net $\bigl((s_U, t_U)\bigr)_{U
    \in \comps_e}$ in the compact space $D$, and let
  $\bigl((s_{\gamma}, t_{\gamma})\bigr)_{\gamma \in \Gamma}$ be a
  convergent subnet, that is, $\lim_\gamma
  (s_{\gamma},t_{\gamma})=(s^*,t^*)\in D$.

  Since $\psi$ is continuous in the point $(s^*,t^*)$ there are open
  neighborhoods $W_{s^*}$ of the point $s^*$ and $W_{t^*}$ of the
  point $t^*$ such that 
  \begin{equation}
    \label{eq:psi_variation}
    \var_{W_{s^*}\times W_{t^*}} \psi
    <\epsilon_0.
  \end{equation}
   Choose $F_{s^*}\in \comps_{s^*}$, $F_{s^*} \subset
  W_{s^*}$, and $F_{t^*} \in \comps_{t^*}$, $F_{t^*} \subset W_{t^*}$,
  and using $(H_7)$ we let $U_1 \in \comps_e$, $U_1\subset U_0$, be
  such that $F_{s^*}\cdot \check U_1 \subset W_{s^*}$ and $F_{t^*}
  \cdot \check U_1 \subset W_{t^*}$.

  Since $\lim_\gamma (s_{\gamma}, t_\gamma)=(s^*,t^*)$, there is
  $\gamma^0\in \Gamma$ such that $(s_{\gamma}, t_\gamma) \in
  F_{s^*}\times F_{t^*}$ for all $\gamma\geq \gamma^0$. Choose $\gamma
  \geq \gamma^0$ such that $U_\gamma \subset U_1$. But then, by the
  construction of $(s_\gamma, t_\gamma)$, we have that
  $\var_{s_\gamma\check U_\gamma \times t_\gamma \check U_\gamma} \psi
  \geq \epsilon_0$, which is a contradiction
  to~(\ref{eq:psi_variation}), since
  \begin{equation*}
    s_\gamma \check U_\gamma \times t_\gamma\check U_\gamma \subset
  (F_{s^*}\cdot \check U_1) \times (F_{t^*} \cdot \check U_1)
  \subset W_{s^*} \times W_{t^*}.
  \end{equation*}
  Hence~(\ref{eq:psi_variation}) holds for all $(s,t)\in D$.
  
  Setting $U_f=U_\gamma$ and using the obtained
  estimate~(\ref{eq:var<e}) and definition~(\ref{eq:psi_def}) of
  $\psi$ we get that
  \begin{equation*}
    \sup_{(s_1,t_1), (s_2,t_2)\in s\check U_f\times t\check U_f}
    \Bigl|\pair{f}{\varepsilon_{t_1}}
    \frac{\pair{\mu_0*g}{\varepsilon_{s_1}}}
    {\pair{\mu_0*g}{\varepsilon_{t_1}}}
    -
    \pair{f}{\varepsilon_{t_2}}
    \frac{\pair{\mu_0*g}{\varepsilon_{s_2}}}
    {\pair{\mu_0*g}{\varepsilon_{t_2}}}
    \Bigr|
    <\epsilon
  \end{equation*}
  for all $(s,t) \in D$.  Letting $t_1=t=s=s_1=s_2$ we have
  \begin{equation*}
    \sup_{t_2 \in s\check U_f} \Bigl|\pair{f}{\varepsilon_s}-
    \pair{f}{\varepsilon_{t_2}} 
      \frac{\pair{\mu_0 *g}{\varepsilon_s}}
      {\pair{\mu_0*g}{\varepsilon_{t_2}}}\Bigr|<\epsilon,
  \end{equation*}
  and observing that $\supp(\varepsilon_s *\check g)\subset s\check
  U_f$ we obtain the needed estimate~(\ref{eq:main_identity}).
\end{proof}

\begin{lemma}\label{L:main_estimate}
  Let $f \in \fcc_+^*$ and $\mu \in \meas_c$ be nonnegative. Then for
  any $\epsilon>0$ there exists $U\in \comps_e$ such that
  \begin{equation}
    \label{eq:main_estimate}
    (1-\epsilon) \|\mu\|\, \tilde \chi_g(f)<
    \pair{f}{\mu * \tilde \chi_g}< (1+\epsilon) \|\mu\|\, \tilde \chi_g(f),
  \end{equation}
  for arbitrary $g \in \fcc^*_+(U)$ satisfying $\check g=g$.
\end{lemma}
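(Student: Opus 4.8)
The plan is to use the approximate identity of the preceding lemma. Fix a compact neighbourhood $V$ of $\supp(f)$, put $K=\supp(\mu)$, and abbreviate $\phi=f\tilde\chi_g$, a positive measure with $\supp(\phi)=\supp(f)$ and $\|\phi\|=\tilde\chi_g(f)$. Starting from the identity $\pair{f}{\mu*\tilde\chi_g}=\pair{\check\mu*f}{\tilde\chi_g}$ (the first relation of~(\ref{eq:function_measure_convolution_properties}) with $\mu$ replaced by $\check\mu$), the difficulty is that $f$ does not interact well with the convolution on the right, whereas its reproduction $\tilde f=(f\tilde\chi_g)*g=\phi*g$ does. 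So I would first replace $f$ by $\tilde f$, using the preceding lemma to guarantee $\|f-\tilde f\|<\epsilon_1$ (with $\epsilon_1$ fixed at the very end and $U$ correspondingly small), and then evaluate $\pair{\tilde f}{\mu*\tilde\chi_g}$ exactly.

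For this main computation, using associativity and the identities of~(\ref{eq:function_measure_convolution_properties}) together with $\check g=g$ and Lemma~1, I would rewrite
\[
  \pair{\tilde f}{\mu*\tilde\chi_g}
  =\pair{(\check\mu*\phi)*g}{\tilde\chi_g}
  =\int_Q L(u)\,d\rho(u),
  \qquad \rho=\check\mu*\phi,
\]
where $L(u)=\int_Q(\varepsilon_u*g)(r)\,d\tilde\chi_g(r)=\tilde\chi_g(\varepsilon_u*g)$, using that the kernel $(r,u)\mapsto(\varepsilon_r*\check g)(u)$ is symmetric when $\check g=g$. Two observations finish this step. First, since $\varepsilon_s*\varepsilon_t\in\meas_p$ by $(H_1)$, the masses of positive measures multiply under $*$, so $\|\rho\|=\|\check\mu\|\,\|\phi\|=\|\mu\|\,\tilde\chi_g(f)$; moreover $\supp(\rho)=\check K\cdot\supp(f)$ is a compact set $R$ independent of $g$ (as $\supp(\phi)=\supp(f)$). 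Second, $L(u)$ is exactly the value at $u$ of the reproduction $(h\tilde\chi_g)*g$ of any $h\in\fcc_+$ equal to $1$ on a neighbourhood of $R\cdot U$; hence the preceding lemma gives $|L(u)-1|<\epsilon_1$ uniformly for $u\in R$, the uniformity being obtained exactly as there, through the variation of the continuous kernel over the compact set $R$ and axiom~$(H_7)$. Integrating $1-\epsilon_1<L(u)<1+\epsilon_1$ against the positive measure $\rho$ then yields $(1-\epsilon_1)\|\mu\|\tilde\chi_g(f)\le\pair{\tilde f}{\mu*\tilde\chi_g}\le(1+\epsilon_1)\|\mu\|\tilde\chi_g(f)$.

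It remains to return from $\tilde f$ to $f$. Since $f-\tilde f$ is supported in $V$ and $\|f-\tilde f\|<\epsilon_1$, the error is $|\pair{f}{\mu*\tilde\chi_g}-\pair{\tilde f}{\mu*\tilde\chi_g}|\le\epsilon_1\,(\mu*\tilde\chi_g)(V)$. Bounding crudely through $\mu*\tilde\chi_g=\int\varepsilon_s*\tilde\chi_g\,d\mu(s)$, and using $(\mu*\tilde\chi_g)(V)\le\pair{1_V}{\mu*\tilde\chi_g}$ together with $\pair{1_V}{\varepsilon_s*\tilde\chi_g}\le\|1_V\|\,\tilde\chi_g(W)$ for the fixed compact set $W=\check K\cdot V$ (because $t\mapsto\pair{1_V}{\varepsilon_s*\varepsilon_t}$ is supported in $\check K\cdot V$ for $s\in K$), one gets $(\mu*\tilde\chi_g)(V)\le\|\mu\|\,\|1_V\|\,\tilde\chi_g(W)$. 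Combining with the lower bound of the previous paragraph, the whole relative error is at most a constant multiple of $\epsilon_1\,\tilde\chi_g(W)/\tilde\chi_g(f)$, so it suffices to have a comparison estimate $\tilde\chi_g(W)\le C\,\tilde\chi_g(f)$ with $C$ independent of $g\in\fcc^*_+(U)$; granting it, one chooses $\epsilon_1$ small enough and obtains~(\ref{eq:main_estimate}).

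The main obstacle is precisely this uniform comparison $\tilde\chi_g(W)\le C\,\tilde\chi_g(f)$, the hypergroup analogue of the finiteness of Haar covering numbers. Here the ratio $\tilde\chi_g(W)/\tilde\chi_g(f)$ is invariant under rescaling of $g$, and the regularity $\tilde\chi_g(\varepsilon_u*g)\approx1$ established above says that, at scale $U$, every bump $\varepsilon_u*g$ carries $\tilde\chi_g$-mass close to $1$. I would prove the comparison by a covering argument: cover $W$ by finitely many translates of the nonempty open set on which $f\ge\delta>0$ (finiteness by compactness of $W$ and the fact that such translates cover $Q$), dominate $1_W$ by the corresponding finite sum of translates of $f$ controlled via~$(H_7)$, and bound the $\tilde\chi_g$-mass of each translate of $f$ above by a multiple of $\tilde\chi_g(f)$ using the clean upper bound of the second paragraph. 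Making the number of translates and the domination constants depend only on $f$ and $W$, and not on $g$, is the delicate point, and it is there that the uniform regularity $\tilde\chi_g(\varepsilon_u*g)\approx1$ and axiom~$(H_7)$ are essential.
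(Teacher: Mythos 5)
Your treatment of the main term coincides with the paper's. The paper likewise replaces $f$ by its reproduction $(f\tilde\chi_g)*g$ via~(\ref{eq:main_identity}), rewrites $\pair{(f\tilde\chi_g)*g}{\mu*\tilde\chi_g}$ as $\pair{(1_{L_1}\tilde\chi_g)*\check g}{\check\mu*(f\tilde\chi_g)}$ with $L_1=\supp(\check\mu)\cdot\supp(f)\cdot\supp(g)$, applies~(\ref{eq:main_identity}) a second time to conclude that $(1_{L_1}\tilde\chi_g)*g$ is uniformly within $\epsilon_2$ of $1_{L_1}$, which equals $1$ on $\supp(\check\mu*(f\tilde\chi_g))$, and evaluates $\pair{1_{L_1}}{\check\mu*(f\tilde\chi_g)}=\|\mu\|\,\tilde\chi_g(f)$ by multiplicativity of total mass; your kernel $L(u)=\tilde\chi_g(\varepsilon_u*g)$ is (for $u$ in the relevant compact set, and using $\check g=g$) exactly the value at $u$ of such a reproduction, so your second paragraph is a reformulation of the paper's computation and is correct. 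The gap is in the error term. You correctly reduce it to $\epsilon_1\|\mu\|\,\tilde\chi_g(W)$ for a fixed compact $W$ and correctly observe that, since $\epsilon_1$ must be fixed \emph{before} $U$ (hence before $g$) is chosen, everything hinges on a comparison $\tilde\chi_g(W)\leq C\,\tilde\chi_g(f)$ with $C$ independent of $g$. But your proposed covering argument for this comparison is circular: the step ``bound the $\tilde\chi_g$-mass of each translate of $f$ above by a multiple of $\tilde\chi_g(f)$'' is itself an instance of~(\ref{eq:main_estimate}) with $\mu$ a point mass, and the ``clean upper bound of the second paragraph'' that you invoke controls only $\pair{\tilde f}{\mu*\tilde\chi_g}$, not $\pair{f}{\mu*\tilde\chi_g}$; returning from $\tilde f$ to $f$ reintroduces an error $\epsilon_1\tilde\chi_g(W')$ over a strictly larger compact set $W'$, so the induction never closes.

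To be fair, you have put your finger on precisely the point where the paper itself is thinnest: the paper closes the estimate by taking $\epsilon_1=\frac{\tilde\chi_g(f)}{2\tilde\chi_g(1_{K_1})}\epsilon$, a quantity that depends on $g$, which tacitly presupposes the same uniform lower bound on $\tilde\chi_g(f)/\tilde\chi_g(1_{K_1})$; that bound is supplied only afterwards, in Lemma~\ref{L:mu_U-definition+bounds}, by combining Spector's domination theorem (for $h,f\in\fcc_+^*$ there is a positive $\nu\in\meas_c$ with $h<\nu*f$) with the present lemma for $\epsilon=1$, so the order of choices has to be untangled with care. The missing ingredient in your write-up is therefore not a covering argument of your own devising but Spector's domination result together with a non-circular arrangement of the quantifiers; as submitted, the proposal proves the main term correctly but leaves the decisive uniform comparison, and hence the stated inequality~(\ref{eq:main_estimate}), unestablished.
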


\begin{proof}
  Fix $U_0\in \comps_e$.  For $\epsilon_1>0$, let $U_1\in \comps_e$,
  $U_1 \subset U_0$, be such that~(\ref{eq:main_identity}) holds for
  any $g \in \fcc^*_+(U_1)$.  Since $\supp((f\chi_g)*g)=\supp(f)\cdot
  \supp(g)\subset \supp(f)\cdot U_0$, setting $K=\supp(f)\cdot U_0$
  and using~(\ref{eq:main_identity}) gives that
  \begin{equation*}
    (f\tilde \chi_g)*g-\epsilon_1 1_K
    <f<(f\tilde \chi_g)*g+\epsilon_1 1_K.
  \end{equation*}
  Hence,
  \begin{multline}
        \pair{(f\tilde \chi_g)*g}{\mu* \tilde \chi_g}-
      \epsilon_1 \pair{1_K}{\mu*\tilde \chi_g}<
      \pair{f}{\mu*\tilde \chi_g}\\
      <
      \pair{(f\tilde \chi_g)*g}{\mu* \tilde \chi_g}+
      \epsilon_1 \pair{1_K}{\mu*\tilde \chi_g}.\label{eq:2.1}
  \end{multline}

  Consider the first term in the left- and right-hand sides of~(\ref{eq:2.1}),
  \begin{equation*}
      \pair{(f\tilde \chi_g)*g}{\mu* \tilde \chi_g}=
      \pair{\check \mu * (f\tilde \chi_g)*g}{\tilde \chi_g}.
  \end{equation*}
  Denote $L_1=\supp(\check \mu * (f\tilde \chi_g)*g)=\supp(\check \mu)\cdot \supp(f)\cdot \supp(g)$, which is a
  compact set. Then
  \begin{equation}
    \label{eq:2.1++}
    \pair{\check \mu * (f\tilde \chi_g)*g}{\tilde \chi_g}=
    \pair{\check \mu * (f\tilde \chi_g)*g}{1_{L_1}\tilde \chi_g}=
    \pair{(1_{L_1}\tilde \chi_g)*\check g }{\check \mu*(f\tilde \chi_g)}.
  \end{equation}
  Let $\epsilon_2>0$ be arbitrary. Let $U_2\in \comps_e$, $U_2\subset
  U_0$, be such that
  \begin{equation*}
    \|1_{L_1}- (1_{L_1}\tilde \chi_g)*g\|<\epsilon_2
  \end{equation*}
  for any $g \in \fcc_+^*(U_2)$. If $L_2=\supp((1_{L_1} \tilde
  \chi_g)*g)= \supp(1_{L_1})\cdot \supp(g)$, then it follows from the last
  estimate that
  \begin{equation*}
    1_{L_1} - \epsilon_2 1_{L_2}<
    (1_{L_1}\tilde \chi_g)*g < 1_{L_1} + \epsilon_2 1_{L_2}.
  \end{equation*}
  Thus using that $\check g=g$ and the last inequality we get
  \begin{equation}
    \label{eq:2.1+-}
    \pair{1_{L_1} - \epsilon_2 1_{L_2}}{\check \mu*(f\tilde \chi_g)}<
    \pair{(1_{L_1}\tilde \chi_g)*\check g}{\check \mu*(f\tilde \chi_g)} 
    < \pair{1_{L_1} + \epsilon_2 1_{L_2}}{\check \mu*(f\tilde \chi_g)}.
  \end{equation}
  Since $\supp(\check \mu*(f\tilde \chi_g))=\supp(\check \mu)\cdot
  \supp(f) \subset L_1\subset L_2$, we have
  \begin{equation*}
    \pair{1_{L_1}}{\check \mu*(f\tilde \chi_g)}=
    \pair{1_Q}{\check \mu*(f\tilde \chi_g)}=
    \check \mu(1_Q) \tilde \chi_g(f)=\|\mu\|\, \tilde \chi_g(f).
  \end{equation*}
  In the same way,
  \begin{equation*}
    \pair{1_{L_2}}{\check \mu*(f\tilde \chi_g)}=
    \|\mu\| \,\tilde \chi_g(f).
  \end{equation*}
  Hence, using the obtained values in inequalities~(\ref{eq:2.1+-}) we get
  from~(\ref{eq:2.1++}) an estimate for the first term
  in~(\ref{eq:2.1}),
  \begin{equation}
    \label{eq:2.11}
    (1-\epsilon_2)\,\|\mu\|\, \tilde \chi_g(f) <
    \pair{(f \tilde \chi_g)*g}{ \mu*\tilde \chi_g}<
    (1+\epsilon_2)\,\|\mu\| \,\tilde \chi_g(f) .
  \end{equation}

  Consider now the second term in~(\ref{eq:2.1}),
  \begin{equation*}
    \epsilon_1\pair{1_K}{\mu*\tilde \chi_g}=
    \epsilon_1\pair{\check \mu*1_K}{\tilde \chi_g}.
  \end{equation*}
  If $K_1=\supp(\check \mu *1_K)$, which is compact, then
  \begin{eqnarray}
    \epsilon_1\pair{\check \mu*1_K}{\tilde \chi_g}&=&
    \epsilon_1\pair{\check \mu*1_K}{1_{K_1}\tilde \chi_g}=
    \epsilon_1\pair{1_K}{\mu*(1_{K_1} \tilde \chi_g)}
    \nonumber\\
    &<&
    \epsilon_1\pair{1}{\mu*(1_{K_1} \tilde \chi_g)}
    =\epsilon_1 \mu(1) \,\tilde \chi_g(1_{K_1})
    \nonumber\\
    &=&\epsilon_1 \|\mu\|\, \tilde \chi_g(1_{K_1}).
    \label{eq:2.3}
  \end{eqnarray}
  If we took $\epsilon_1= \frac{ \tilde \chi_g(f)}{2 \tilde
    \mu(1_{K_1})} \epsilon$, $\epsilon_2=\frac{1}{2}\epsilon $, $g\in
  \fcc_+^*(U)$, where $U=U_1\cap U_2$, using~(\ref{eq:2.11})
  and~(\ref{eq:2.3}) in~(\ref{eq:2.1}) we would arrive
  at~(\ref{eq:main_estimate}).
\end{proof}

\begin{lemma}
  \label{L:mu_U-definition+bounds}
  Let $f_0 \in \fcc^*_+$ be fixed. For $U\in \comps_e$ and $g\in
  \fcc_+^*(U)$ satisfying $\check g=g$, define
  \begin{equation}
    \label{eq:mu-definition}
    \chi_g=\frac{\tilde \chi_g}{\tilde \chi_g(f_0)},
  \end{equation}
  where $\tilde \chi_g$ is defined in~(\ref{eq:mu_U_definition}). Then
    for any $f \in \fcc_+^*$ there exist positive $a_f, b_f \in \R$ and 
   $U_f\in \comps_e$ such that
  \begin{equation}
    \label{eq:m_u-bounds}
    a_f<\chi_g(f)<b_f
  \end{equation}
  for any $g\in \fcc_+^*(U_f)$, $\check g=g$.
\end{lemma}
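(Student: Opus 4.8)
The plan is to estimate $\chi_g(f)=\tilde\chi_g(f)/\tilde\chi_g(f_0)$ from above and below by combining the approximate invariance furnished by Lemma~\ref{L:main_estimate} with a covering (domination) argument. Since $\supp(\tilde\chi_g)=Q$ and $f_0\in\fcc_+^*$, we have $\tilde\chi_g(f_0)>0$, so $\chi_g(f)$ is a well-defined positive real. The whole argument rests on the following domination principle, which I would establish first: \emph{for any $\phi,\psi\in\fcc_+^*$ there is a nonnegative $\nu\in\meas_c$ with $\nu*\phi\geq\psi$ pointwise on $Q$.} Granting this, both bounds follow symmetrically. For the upper bound, apply the principle with $\phi=f_0$, $\psi=f$ to obtain $\nu$; since $\nu*f_0-f\geq0$ and $\tilde\chi_g$ is a positive measure, $\tilde\chi_g(f)\leq\tilde\chi_g(\nu*f_0)$. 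Rewriting $\tilde\chi_g(\nu*f_0)=\pair{\nu*f_0}{\tilde\chi_g}=\pair{f_0}{\check\nu*\tilde\chi_g}$ by~(\ref{eq:function_measure_convolution_properties}) and applying Lemma~\ref{L:main_estimate} (with $\mu=\check\nu$, say $\epsilon=1$), I obtain $\pair{f_0}{\check\nu*\tilde\chi_g}<2\|\check\nu\|\,\tilde\chi_g(f_0)=2\|\nu\|\,\tilde\chi_g(f_0)$, using $\|\check\nu\|=\check\nu(1_Q)=\nu(1_Q)=\|\nu\|$. Hence $\chi_g(f)<2\|\nu\|=:b_f$ for every symmetric $g$ supported in the neighborhood $U'$ supplied by Lemma~\ref{L:main_estimate}. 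The lower bound comes out the same way with the roles of $f$ and $f_0$ interchanged: with $\phi=f$, $\psi=f_0$ one gets $\nu'\geq0$ in $\meas_c$ with $\nu'*f\geq f_0$, whence $\tilde\chi_g(f_0)\leq\pair{f}{\check{\nu'}*\tilde\chi_g}<2\|\nu'\|\,\tilde\chi_g(f)$, so $\chi_g(f)>1/(2\|\nu'\|)=:a_f$ for $g$ supported in the corresponding $U''$. Taking $U_f=U'\cap U''\in\comps_e$ then yields~(\ref{eq:m_u-bounds}) for all $g\in\fcc_+^*(U_f)$ with $\check g=g$.

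It remains to prove the domination principle, which I expect to be the main obstacle. By continuity of $\varepsilon_s*\phi\in\fc$ and compactness of $\supp(\psi)$, it suffices to show that for each $t_0\in\supp(\psi)$ there is a point $s_0$ with $(\varepsilon_{s_0}*\phi)(t_0)>0$: one then finds a neighborhood $N_{t_0}$ of $t_0$ and a constant $c_{t_0}>0$ with $c_{t_0}(\varepsilon_{s_0}*\phi)\geq\|\psi\|$ on $N_{t_0}$, extracts a finite subcover $N_{t_1},\dots,N_{t_n}$ of $\supp(\psi)$, and sets $\nu=\sum_i c_{t_i}\varepsilon_{s_{t_i}}$, so that $\nu*\phi=\sum_i c_{t_i}(\varepsilon_{s_{t_i}}*\phi)\geq\psi$ everywhere. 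Now $(\varepsilon_{s_0}*\phi)(t_0)=\pair{\phi}{\varepsilon_{\check s_0}*\varepsilon_{t_0}}$ is positive exactly when $\supp(\varepsilon_{\check s_0}*\varepsilon_{t_0})$ meets the nonempty open set $O=\{\phi>0\}$, so the claim reduces to showing $Q\cdot\{t_0\}=Q$ for every $t_0$ (the dense set $\bigcup_s s\cdot\{t_0\}$ then meets $O$, and one relabels $\check s_0$). To see $Q\cdot\{t_0\}=Q$, fix $r\in Q$. By~($H_6$) with $t=s=\check t_0$ we have $e\in\supp(\varepsilon_{\check t_0}*\varepsilon_{t_0})$, hence, using $\supp(\mu*\nu)=\supp(\mu)\cdot\supp(\nu)$, associativity of $*$, and $\supp(\varepsilon_r*\varepsilon_e)=\{r\}$,
\begin{equation*}
  \{r\}=\{r\}\cdot\{e\}\subset\{r\}\cdot\bigl(\{\check t_0\}\cdot\{t_0\}\bigr)=\bigl(\{r\}\cdot\{\check t_0\}\bigr)\cdot\{t_0\}\subset Q\cdot\{t_0\}.
\end{equation*}
Thus $r\in Q\cdot\{t_0\}$, proving $Q\cdot\{t_0\}=Q$.

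The genuinely delicate point is this last step --- guaranteeing that every point of $Q$ is reachable as a limit of products $s\cdot t_0$, equivalently $Q\cdot\{t_0\}=Q$ --- since this is where the hypergroup axioms (the reproducing property in~($H_6$), associativity of $*$, and the support formula $\supp(\mu*\nu)=\supp(\mu)\cdot\supp(\nu)$) must be used together. The remaining passages (monotonicity of $\tilde\chi_g$, the two applications of Lemma~\ref{L:main_estimate} to $\mu=\check\nu$ and $\mu=\check{\nu'}$, and the finite covering of $\supp(\psi)$) are routine.
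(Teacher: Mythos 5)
Your proof is correct and follows essentially the same route as the paper: the paper also obtains measures $\mu_1,\mu_2\in\meas_c$ with $f<\mu_1*f_0$ and $f_0<\mu_2*f$, rewrites $\pair{\mu_1*f_0}{\tilde\chi_g}=\pair{f_0}{\check\mu_1*\tilde\chi_g}$, and applies Lemma~\ref{L:main_estimate} with $\epsilon=1$ to get $b_f=2\|\mu_1\|$ and $a_f=1/(2\|\mu_2\|)$ with $U_f=U_1\cap U_2$. The only difference is that the paper simply cites Spector for the domination principle, whereas you prove it from $(H_6)$, associativity, and the support formula; that derivation is sound and makes the argument self-contained.
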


\begin{proof}
  For functions $f_0,f \in \fcc_+$, it follows
  from~\cite[III~(p.~159)]{spector78:_meas_inv} that there exist
  positive measures $\mu_1, \mu_2 \in \meas_c$ such that
  \begin{equation*}
    f<\mu_1*f_0\qquad\mbox{and}\qquad f_0<\mu_2*f.
  \end{equation*}
  Then, fixing $\epsilon=1$ and letting $U_1\in \comps_e$ be such
  that~(\ref{eq:main_estimate}) holds for $f_0$, $\mu_1$, and
  arbitrary $g \in \fcc_+^*(U_1)$, we find that
  \begin{equation*}
    \tilde \chi_g(f) <\pair{\mu_1*f_0}{\tilde  \chi_g}=
    \pair{f_0}{\check \mu_1*\tilde \chi_g}
    <2 \|\mu_1\|\, \tilde \chi_g(f_0),
  \end{equation*}
  or
  \begin{equation*}
    \chi_g(f)=\frac{\tilde \chi_g(f)}{\tilde \chi_g(f_0)}<2\|\mu_1\|.
  \end{equation*}
  If now $U_2\in \comps_e$ is such that~(\ref{eq:main_estimate}) holds
  for $f$, $\mu_2$, and arbitrary $g\in \fcc_+^*(U_2)$, then we
  similarly get that
  \begin{equation*}
    \tilde \chi_g(f_0)<2\|\mu_2\| \, \tilde \chi_g(f),
  \end{equation*}
  or
  \begin{equation*}
    \chi_g(f)=\frac{\tilde \chi_g(f)}{\tilde \chi_g(f_0)}> \frac{1}{2\|\mu_2\|}.
  \end{equation*}
  Hence, we can take $a_f=\frac{1}{2\|\mu_2\|}$, $b_f=2\|\mu_1\|$, 
  and $U_f=U_1\cap U_2$.
\end{proof}

\begin{lemma}
  \label{L:existence_of_limit}
  For each $U \in \comps_e$, choose $g \in \fcc_+^*(U)$ satisfying
  $\check g=g$.  Then the net $(\chi_{g})_{U \in \comps_e}$ is
  $\sigma(\meas,\fcc)$-convergent, that is, the limit $\lim_{U}
  \chi_{g}(f)$ exists for any $f \in \fcc$.
\end{lemma}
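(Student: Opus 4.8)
The plan is to combine the uniform bounds from Lemma~\ref{L:mu_U-definition+bounds} with the almost–invariance supplied by Lemma~\ref{L:main_estimate}: first extract cluster points of the net $(\chi_g)_{U\in\comps_e}$, then show every cluster point is a normalized left–invariant measure, and finally show there is only one such cluster point, so that the eventually bounded net must converge to it.

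First I would record eventual boundedness. For every $f\in\fcc_+$ Lemma~\ref{L:mu_U-definition+bounds} gives $U_f\in\comps_e$ and $0<a_f<b_f$ with $a_f<\chi_g(f)<b_f$ for all admissible $g\in\fcc^*_+(U_f)$, so the real net $\bigl(\chi_g(f)\bigr)_U$ lies, on a cofinal tail, in the compact interval $[0,b_f]$. Hence $(\chi_g)$ lies eventually in a compact subset of $\prod_{f\in\fcc_+}[0,b_f]$ (extending to all of $\fcc$ by linearity), so it has at least one cluster point, and any cluster point is a positive linear functional on $\fcc$, i.e.\ a measure $\chi\in\meas_+$. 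Since $\chi_g(f_0)=1$ for every $g$, every cluster point satisfies the normalization $\chi(f_0)=1$.

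Next I would identify the cluster points. Fix a nonnegative $\mu\in\meas_c$ and $f\in\fcc_+^*$; then $\check\mu*f\in\fcc$ and, by~(\ref{eq:function_measure_convolution_properties}), $\pair{f}{\mu*\chi_g}=\pair{\check\mu*f}{\chi_g}$. Along a subnet converging to a cluster point $\chi$ the right–hand side tends to $\pair{\check\mu*f}{\chi}$, while $\|\mu\|\,\chi_g(f)\to\|\mu\|\,\chi(f)$. Because the $\epsilon$ in~(\ref{eq:main_estimate}) may be taken arbitrarily small for small $U$ and $\chi_g(f)<b_f$ on the tail, the difference $\pair{f}{\mu*\chi_g}-\|\mu\|\,\chi_g(f)$ tends to $0$ along the whole net; thus every cluster point obeys $\pair{f}{\mu*\chi}=\|\mu\|\,\chi(f)$. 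Taking $\mu=\varepsilon_s$ (so $\|\mu\|=1$) and using~(\ref{eq:function_measure_convolution_properties}) gives $\pair{\varepsilon_{\check s}*f}{\chi}=\pair{f}{\chi}$ for all $s\in Q$, which is exactly~(\ref{eq:left-invariant-measure-def}); so $\chi$ is a normalized left–invariant measure.

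The main obstacle is the last step: that such a cluster point is \emph{unique}. Given two cluster points $\chi_1,\chi_2$, the relation above reads $\mu*\chi_i=\|\mu\|\,\chi_i$ for every nonnegative $\mu\in\meas_c$. Choosing $\mu=f\chi_2\in\meas_c$, which is nonnegative with $\|f\chi_2\|=\chi_2(f)$, yields $\pair{f_0}{(f\chi_2)*\chi_1}=\chi_2(f)$, and symmetrically $\pair{f_0}{(f\chi_1)*\chi_2}=\chi_1(f)$. I would therefore try to prove the symmetry $\pair{f_0}{(f\chi_2)*\chi_1}=\pair{f_0}{(f\chi_1)*\chi_2}$, which forces $\chi_1(f)=\chi_2(f)$ for all $f\in\fcc_+^*$ and hence $\chi_1=\chi_2$. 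To get this symmetry I would write both sides as double integrals of the kernel $\pair{f_0}{\varepsilon_s*\varepsilon_t}$ against $\chi_1$ and $\chi_2$, apply the involution identity $\pair{f_0}{\varepsilon_s*\varepsilon_t}=\pair{\check f_0}{\varepsilon_{\check t}*\varepsilon_{\check s}}$ coming from $(H_5)$ to interchange the two integration variables, and reduce the involuted factor using the left invariance of each $\chi_i$ together with $(H_6)$. This transference between the left– and right–hand convolutions is the delicate point, since the weight $f$ sits asymmetrically on one factor. Once uniqueness of the cluster point is established, the eventual boundedness from the first step turns ``unique cluster point'' into convergence: for each $f\in\fcc_+$ the bounded net $\bigl(\chi_g(f)\bigr)_U$ in $[0,b_f]$ has a single cluster point and hence converges, and the case of general $f\in\fcc$ follows by linearity, giving the asserted existence of $\lim_U\chi_g(f)$.
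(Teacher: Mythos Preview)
Your approach is structurally different from the paper's. The paper never extracts cluster points and never invokes uniqueness of invariant measures; it shows directly that $(\chi_g(f))_{U}$ is Cauchy by comparing two tail values through a common auxiliary $\chi_{g_0}$. Concretely, Lemma~2 gives $f\approx (f\chi_{g_0})*g_0$ and $f_0\approx (f_0\chi_{g_0})*g_0$; applying $\chi_g$ and then Lemma~\ref{L:main_estimate} with $\mu=(f\chi_{g_0})\,\check{}$ yields $\chi_g(f)\approx \chi_{g_0}(f)\,\chi_g(g_0)$, and the $f_0$-version forces $\chi_g(g_0)\approx 1$. One then reads off $|\chi_{g_1}(f)-\chi_{g_2}(f)|\lesssim \epsilon$ for $U_1,U_2$ small enough. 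No uniqueness statement enters.

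Your compactness and identification steps are sound, but the uniqueness step has a genuine gap. The symmetry $\pair{f_0}{(f\chi_2)*\chi_1}=\pair{f_0}{(f\chi_1)*\chi_2}$ is tantamount to $\chi_1=\chi_2$ and is not delivered by the maneuver you sketch. Writing both sides as $\iint \pair{f_0}{\varepsilon_s*\varepsilon_t}\,f(s)\,d\chi_i(s)\,d\chi_j(t)$ and applying the involution identity from $(H_5)$ converts the integrals into ones against $\check\chi_1,\check\chi_2$, which are \emph{right}-invariant, while the asymmetric weight $f$ persists; left invariance of $\chi_i$ gives you nothing on $\check\chi_i$, and axiom $(H_6)$ is a support condition that produces no integral identity here. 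What you actually need is the uniqueness theorem for left Haar measure on a hypergroup, which is a separate, nontrivial result (Jewett) with its own proof and is not a formal consequence of $(H_5)$--$(H_6)$ plus invariance. If you import that theorem, your argument closes and gives a legitimate alternative route; as written, the third step does not go through, and the paper's direct Cauchy argument is precisely what avoids this dependency.
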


\begin{proof}
  It is sufficient to prove that the net $(\chi_{g}(f))_{U\in
    \comps_e}$ is Cauchy for $f\in \fcc_+^*$. 

  First of all, it follows from~(\ref{eq:mu-definition}) that
  $\chi_{k g}=\chi_{g}$ for any real $k>0$. So we can
  assume that all considered $g$ are normalized so that $\tilde
  \chi_{g}(f_0)=1$, hence $\tilde \chi_{g}=\chi_{g}$.

  Now, let $V\in \comps_e$ be fixed, and let $\epsilon$,
  $0<\epsilon<1$, be arbitrary.

  Let $U_0\in \comps_e$, $U_0\subset V$, be such
  that~(\ref{eq:main_identity}) holds for the functions $f, f_0$ and
  the measure $ \chi_{g_{0}}$, as well as $\chi_{g_{0}}(f)$ satisfies
  estimate~(\ref{eq:m_u-bounds}), for any $g_0 \in \fcc_+^*(U_0)$. Then
  \begin{gather}
    (f \chi_{g_{0}}) * g_{0} - \epsilon 1_K<
    f
    <
    (f \chi_{g_{0}}) * g_{U_0} + \epsilon 1_K,
    \label{eq:<>f} \\
        (f_0 \chi_{g_{0}}) * g_{0} - \epsilon 1_K<
    f_0
    <
    (f_0 \chi_{g_{0}}) * g_{0} + \epsilon 1_K,
    \label{eq:<>f0}
  \end{gather}
  where we set $K=\bigl(\supp(f)\cup \supp(f_0)\bigr)\cdot V$.
  Considering~(\ref{eq:<>f}) we get that
  \begin{equation}
    \label{eq:mu_1(f)<>}
    \pair{(f \chi_{g_{0}}) * g_{0}}{ \chi_{g}} - 
    \epsilon  \chi_{g}(1_K)<
    \chi_{g}(f) 
    <
    \pair{(f \chi_{g_{0}}) * g_{0}}{ \chi_{g}} 
    + \epsilon  \chi_{g}(1_K)
  \end{equation}
  for any $U \in \comps_e$ and $g\in \fcc_+^*(U)$.  Now, consider
  \begin{equation*}
    \pair{(f \chi_{g_{0}})*g_{0}}{ \chi_{g}}=
    \pair{g_{0}}{(f \chi_{g_{0}})\, \check{} \, *
       \chi_{g}},
  \end{equation*}
  and let $U\in \comps_e$ be such that the measure $ \chi_{g}$, $g \in
  \fcc_+^*(U)$, would satisfy~(\ref{eq:main_estimate}) for the
  function $g_{0}$ and the measure $(f \chi_{g_{0}})\,\check {}\,
  $. Then we have
  \begin{equation*}
    (1-\epsilon)  \chi_{g_{0}}(f)  \chi_{g}(g_{0})<
    \pair{(f \chi_{g_{0}})*g_{0}}{ \chi_{g}}<
    (1+\epsilon)  \chi_{g_{0}}(f)  \chi_{g}(g_{0}),
  \end{equation*}
  which, together with~(\ref{eq:mu_1(f)<>}), gives
  \begin{equation}
    (1-\epsilon)  \chi_{g_{0}}(f)  \chi_{g}(g_{0})
    -\epsilon  \chi_{g}(1_K) <  \chi_{g}(f)<
    (1+\epsilon)  \chi_{g_{0}}(f)  \chi_{g}(g_{0})
    +\epsilon  \chi_{g}(1_K).\label{eq:mu_1(f)<>!}
  \end{equation}
  Assuming that $U$ was chosen such that the measure $ \chi_{g}$
  satisfies~(\ref{eq:main_estimate}) also for the function $g_{0}$ and
  the measure $(f_0 \chi_{g_{0}})\, \check{}\,$, we get similar
  estimates for $ \chi_{g}(f_0)$,
  \begin{equation}
    (1-\epsilon)  \chi_{g_{0}}(f_0)  \chi_{g}(g_{0})
    -\epsilon  \chi_{g}(1_K) <  \chi_{g}(f_0)<
    (1+\epsilon)  \chi_{g_{0}}(f_0)  \chi_{g}(g_{0})
    +\epsilon  \chi_{g}(1_K).\label{eq:mu_1(f_0)<>!}
  \end{equation}
  Since $g$ is normalized so that $ \chi_{g}(f_0)=1$, these
  inequalities yield the estimate
  \begin{equation*}
    \frac{1-\epsilon  \chi_{g}(1_K)}{1+\epsilon}<
     \chi_{g}(g_{0}) <\frac{1+\epsilon  \chi_{g}(1_K)}
    {1-\epsilon}.
  \end{equation*}
  If we apply this estimate to~(\ref{eq:mu_1(f)<>!}) we get
  \begin{multline*}
    \label{eq:<>final}
    \frac{1-\epsilon}{1+\epsilon} \bigl(1-\epsilon
    \chi_{g}(1_K)\bigr)\chi_{g_{0}}(f) -\epsilon \chi_{g}(1_K)<
    \chi_{g}(f)\\
    <\frac{1+\epsilon}{1-\epsilon} \bigl(1+\epsilon
    \chi_{g}(1_K)\bigr)\chi_{g_{0}}(f) +\epsilon \chi_{g}(1_K).
  \end{multline*}
  Finally, assuming that $U$ was chosen so that $\chi_{g}(1_K)$
  satisfies estimate~(\ref{eq:m_u-bounds}), we get
  \begin{equation}
    \label{eq:final_estimate_U}
    \frac{1-\epsilon}{1+\epsilon} (1-\epsilon b_{1_K}) \chi_{g_{0}}(f)-
    \epsilon b_{1_K}<
    \chi_{g}(f)
    <
    \frac{1+\epsilon}{1-\epsilon} (1+\epsilon b_{1_K}) \chi_{g_{0}}(f)+
    \epsilon b_{1_K}.
  \end{equation}
  This implies that for any $U_1 \subset U$, $U_2\subset U$, and
  $g_1\in\fcc_+^*(U_1)$, $g_2\in \fcc_+^*(U_2)$,
  \begin{equation}
    \label{eq:||estimate}
    |\chi_{g_{1}} (f) - \chi_{g_{2}}(f)|<\frac{2\epsilon}{1-\epsilon^2}
    \bigl((1+ (1+\epsilon^2) b_{1_K}\bigr) \chi_{g_{0}}(f) + 
    2\epsilon b_{1_K}.
  \end{equation}
  Since $U_0$ was chosen so that
  $\chi_{g_{0}}(f)<b_{f}$, it follows from~(\ref{eq:||estimate})  that
  $\bigl(\chi_{g}(f)\bigr)_{U\in \comps_e}$ is a Cauchy net.
\end{proof}

\begin{theorem*}
  On any \textup{(}locally compact\textup{)} hypergroup $Q$ there
  exists a left invariant measure $\chi$ with $\supp(\chi)=Q$.
\end{theorem*}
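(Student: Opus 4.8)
The plan is to take for $\chi$ the weak limit produced by Lemma~\ref{L:existence_of_limit} and to verify in turn that it is a nonzero positive Radon measure, that $\supp(\chi)=Q$, and that it is left invariant. Concretely, for each $U\in\comps_e$ fix a symmetric $g=g_U\in\fcc_+^*(U)$ as in Lemma~\ref{L:existence_of_limit} and define
\[
  \chi(f)=\lim_{U}\chi_g(f),\qquad f\in\fcc,
\]
the limit existing by that lemma. As a pointwise (net) limit of the linear functionals $\chi_g$, the map $\chi$ is linear; and since every $\tilde\chi_g$, hence every $\chi_g$, is a positive measure, $\chi(f)\ge 0$ for $f\in\fcc_+$. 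A positive linear functional on $\fcc$ is automatically a positive Radon measure, so $\chi\in\meas_+$, and it is nonzero because the normalization~(\ref{eq:mu-definition}) forces $\chi_g(f_0)=1$ for every $g$, whence $\chi(f_0)=1$.

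For the support I would appeal to Lemma~\ref{L:mu_U-definition+bounds}: given any $f\in\fcc_+^*$, for all sufficiently small $U$ one has $a_f<\chi_g(f)$, and passing to the limit gives $\chi(f)\ge a_f>0$. Thus $\chi$ is strictly positive on every nonzero nonnegative function of compact support, which is precisely the assertion $\supp(\chi)=Q$.

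The essential point is left invariance, and its source is Lemma~\ref{L:main_estimate}. Fix $s\in Q$, $f\in\fcc_+^*$ and $\epsilon>0$, and apply Lemma~\ref{L:main_estimate} to the nonnegative measure $\mu=\varepsilon_{\check s}\in\meas_c$, for which $\|\mu\|=1$. Dividing estimate~(\ref{eq:main_estimate}) by $\tilde\chi_g(f_0)>0$ replaces $\tilde\chi_g$ by $\chi_g$ throughout and yields, for every symmetric $g$ supported in an appropriate $U\in\comps_e$,
\[
  (1-\epsilon)\,\chi_g(f)<\pair{f}{\varepsilon_{\check s}*\chi_g}<(1+\epsilon)\,\chi_g(f).
\]
I would then rewrite the middle term with the convolution identity of~(\ref{eq:function_measure_convolution_properties}), namely $\pair{\varepsilon_s*f}{\chi_g}=\pair{f}{\check\varepsilon_s*\chi_g}=\pair{f}{\varepsilon_{\check s}*\chi_g}$, using $\check\varepsilon_s=\varepsilon_{\check s}$. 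The crucial feature is that $\varepsilon_s*f$ is now a \emph{fixed} function in $\fcc$, so Lemma~\ref{L:existence_of_limit} applies and $\pair{\varepsilon_s*f}{\chi_g}\to\pair{\varepsilon_s*f}{\chi}$ along the net, while simultaneously $\chi_g(f)\to\chi(f)$. Passing to the limit gives $(1-\epsilon)\chi(f)\le\pair{\varepsilon_s*f}{\chi}\le(1+\epsilon)\chi(f)$, and letting $\epsilon\to0$ produces $\pair{\varepsilon_s*f}{\chi}=\chi(f)$. This is the defining equality~(\ref{eq:left-invariant-measure-def}) for $f\in\fcc_+^*$; as both sides are linear in $f$ and each $f\in\fcc$ is a complex combination of functions in $\fcc_+$, left invariance extends to all of $\fcc$, which finishes the argument.

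I expect the main obstacle to lie not in any individual estimate but in the interchange of the two limiting mechanisms: the net limit $U\to\{e\}$ defining $\chi$ and the approximation parameter $\epsilon$ of Lemma~\ref{L:main_estimate}. The device that resolves this is exactly the rewriting $\pair{f}{\varepsilon_{\check s}*\chi_g}=\pair{\varepsilon_s*f}{\chi_g}$, which transfers the entire $g$-dependence into the second argument of a pairing against the single fixed function $\varepsilon_s*f\in\fcc$ --- the one setting in which Lemma~\ref{L:existence_of_limit} guarantees convergence. One must still check that the neighborhood $U$ furnished by Lemma~\ref{L:main_estimate} (which depends on $s$, $f$, $\epsilon$) is eventually reached by the net index, but this is automatic: $\comps_e$ is directed by reverse inclusion and $g_U$ is supported in $U$, so the estimate holds for all indices past that $U$.
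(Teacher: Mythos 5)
Your proposal is correct and follows essentially the same route as the paper: define $\chi$ as the net limit from Lemma~\ref{L:existence_of_limit}, get $\supp(\chi)=Q$ from the lower bound $a_f$ of Lemma~\ref{L:mu_U-definition+bounds}, and obtain left invariance by putting $\mu=\check\varepsilon_s$ into~(\ref{eq:main_estimate}), normalizing, and passing to the limit via $\pair{f}{\check\varepsilon_s*\chi_g}=\pair{\varepsilon_s*f}{\chi_g}$. The only (harmless) divergence is that you deduce the Radon property from positivity of the limit functional, whereas the paper verifies boundedness on each $\fcc(K)$ directly via the bound $b_{1_K}$.
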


\begin{proof}
  For any $f \in \fcc_+^*$, set $\chi(f)=\lim_U \chi_{g}(f)$ as in
  Lemma~\ref{L:existence_of_limit}, and extend it by linearity to a
  linear functional on $\fcc$.

  Let $K$ be a compact subset of $Q$ and $f_1, f_2 \in \fcc(K)$.
  Then, for any $U\in \comps_e$ and $g \in \fcc_+^*(U)$, we have
  \begin{equation*}
    |\chi_{g}(f_1)-\chi_{g}(f_2)|=|\chi_{g}(f_1-f_2)|=
    |\chi_{g}(1_K(f_1-f_2))|\leq \chi_{g}(1_K) \|f_1-f_2\|.
  \end{equation*}
  This shows that, eventually,
  \begin{equation*}
    |\chi_{g}(f_1)-\chi_{g}(f_2)|\leq b_{1_K} \|f_1-f_2\|,
  \end{equation*}
  hence $\chi$ is a bounded linear functional on $\fcc(K)$ for
  arbitrary compact $K$, and thus a Radon measure.

  By setting $\mu=\check \varepsilon_s$ in
  inequality~(\ref{eq:main_estimate}) for $\tilde \chi_{g}$, dividing
  it by $\tilde \chi_{g}(f_0)$ and using that $\|\check
  \varepsilon_s\|=1$, we get
  \begin{equation*}
    (1-\epsilon)\chi_{g}(f)<\pair{f}
    {\check \varepsilon_s*\chi_{g}}=
    \pair{\varepsilon_s*f}{\chi_{g}}<    
    (1+\epsilon)\chi_{g}(f).
  \end{equation*}
  Since this eventually holds for any $\epsilon>0$, we see that
  $\chi$ satisfies~(\ref{eq:left-invariant-measure-def}), hence it
  is a left invariant measure. It is clear that $\supp(\chi)=Q$,
  since, for any $f \in \fcc_+^*$, we eventually have
  $\chi_{g}(f)>a_f>0$ by Lemma~\ref{L:mu_U-definition+bounds}.
\end{proof}

\bigskip \textit{Acknowledgments.} The author is very grateful to
A.~A.~Kalyuzhnyi and G.~B.~Podkolzin for numerous and fruitful
discussions of the subject of the article.
%\bibliographystyle{utf8gost780u}
%\bibliographystyle{unsrt}
%\bibliography{./../../../bibl_s}

\end{document}